\documentclass{amsart}
\usepackage{amsfonts}
\usepackage{graphicx}
\usepackage{amscd}
\usepackage{amsmath}
\usepackage{amssymb}

\makeatletter
\@namedef{subjclassname@2010}{%
  \textup{2010} Mathematics Subject Classification}
\makeatother

\setcounter{MaxMatrixCols}{30}
\theoremstyle{plain}
\newtheorem*{acknowledgement}{Acknowledgement}

\newtheorem{corollary}{\bf Corollary}
\newtheorem{definition}{\bf Definition}
\newtheorem{lemma}{\bf Lemma}

\newtheorem{theorem}{\bf Theorem}

\theoremstyle{definition}

\numberwithin{equation}{section}

\begin{document}

\title[Critical metrics of the volume functional]{Critical Metrics of the Volume Functional on Manifolds with Boundary}

\author{H. Baltazar\,\,\,\,\&\,\,\,\,E. Ribeiro Jr}

\address[H. Baltazar]{Departamento de Matem\'{a}tica, Universidade Federal do Piau\'{\i}\\
64049-550 Te\-re\-si\-na, Piau\'{\i}, Brazil.}
\email{halyson@ufpi.edu.br}

\address[E. Ribeiro Jr]{Universidade Federal do Cear\'a - UFC, Departamento  de Matem\'atica, Campus do Pici, Av. Humberto Monte, Bloco 914,
60455-760, Fortaleza / CE, Brazil.}
\email{ernani@mat.ufc.br}
\thanks{E. Ribeiro and H. Baltazar were partially supported by CNPq/Brazil}
\subjclass[2010]{Primary 53C25, 53C20, 53C21; Secondary 53C65}
\keywords{Volume functional; critical metrics; parallel Ricci curvature; harmonic Weyl tensor}
\date{November 5, 2015}

\begin{abstract}
The goal of this article is to study the space of smooth Riemannian structures on compact manifolds with boundary that satisfies a critical point equation associated with a boundary value problem. We provide an integral formula which enables us to show that if a critical metric of the volume functional on a connected $n$-dimensional manifold $M^n$ with boundary $\partial M$ has parallel Ricci tensor, then $M^n$ is isometric to a geodesic ball in a simply connected space form $\mathbb{R}^{n}$, $\mathbb{H}^{n}$ or $\mathbb{S}^{n}$.
\end{abstract}

\maketitle

\section{Introduction}
\label{intro}
An outstanding problem in differential geometry is to find Riemannian metrics on a given manifold $M^n$ that provides constant curvature. In this sense, it is crucial to understand the critical metrics of the Riemannian functionals, as for instance, the total scalar curvature functional and the volume functional. Einstein and Hilbert have proven that the critical points of the total scalar curvature functional restricted to the set of smooth Riemannian structures on $M^n$ of unitary volume are Einstein (cf. Theorem 4.21 in \cite{besse}). Moreover, the total scalar curvature functional restricted to a given conformal class is just the Yamabe functional, whose critical points are constant scalar curvature metrics in that class. Hilbert \cite{hilbert} proved that the equations of general relativity can be recovered from the total scalar curvature functional. From this, we have a natural way to prove the existence of Einstein metrics.
 
Inspired by a result obtained in \cite{fan} as well as in the characterization of the critical points of the total scalar curvature functional, Miao and Tam (cf. \cite{miaotam} and \cite{miaotamTAMS}) studied variational properties of the volume functional constrained to the space of metrics of constant scalar curvature on a given compact manifold with boundary. While Corvino, Eichmair and Miao \cite{CEM} studied the modified problem of finding stationary points for the volume functional on the space of metrics whose scalar curvature is equal to a given constant.

Following the terminology used in \cite{BDR} we recall the definition of Miao-tam critical metrics. 

\begin{definition}
\label{def1} A Miao-Tam critical metric is a 3-tuple $(M^n,\,g,\,f),$ where $(M^{n},\,g)$ is a compact Riemannian manifold of dimension at least three with a smooth boundary $\partial M$ and $f: M^{n}\to \Bbb{R}$ is a smooth function such that $f^{-1}(0)=\partial M$ satisfying the overdetermined-elliptic system
\begin{equation}
\label{eqMiaoTam} \mathfrak{L}_{g}^{*}(f)=g.
\end{equation} Here, $\mathfrak{L}_{g}^{*}$ is the formal $L^{2}$-adjoint of the linearization of the scalar curvature operator $\mathfrak{L}_{g}$. Such a function $f$ is called a potential function.
\end{definition}

We also recall that $$\mathfrak{L}_{g}^{*}(f)=-(\Delta f)g+Hess f-fRic,$$ where $Ric,$ $\Delta$ and $Hess $ stand, respectively, for the Ricci tensor, the Laplacian operator and the Hessian form on $M^n;$ see for instance \cite{besse}. Therefore, the fundamental equation of Miao-Tam critical metrics (\ref{eqMiaoTam}) can be rewritten as
\begin{equation}\label{eqMiaoTam1}
-(\Delta f)g+Hess f-fRic=g.
\end{equation}

In \cite{miaotam}, it was remarked that Miao-Tam critical metrics arise as critical points of the volume functional on $M^n$ when restricted to the class of metrics $g$ with prescribed constant scalar curvature such that $g_{|_{T \partial M}}=h$ for a prescribed Riemannian metric $h$  on the boundary. In addition, they showed that such metrics have constant scalar curvature $R.$ For more background see e.g. Proposition 2.1 and Theorem 2.3 in \cite{CEM}. Some explicit examples of Miao-Tam critical metrics can be found in  \cite{miaotam,miaotamTAMS}. They include the spatial Schwarzschild metrics and AdS-Schwarzschild metrics restricted to certain domains containing their horizon and bounded by two spherically symmetric spheres. We also remember that the standard metrics on geodesic balls in space forms are Miao-Tam critical metrics. For more details see Theorem 6 in \cite{miaotam}.

Miao and Tam \cite{miaotamTAMS} posed the question of whether there exist non-constant sectional curvature Miao-Tam critical metrics on a compact manifold whose boundary is isometric to a standard round sphere. In this sense, inspired by ideas outlined by Kobayashi \cite{kobayashi}, Kobayashi and Obata \cite{obata}, they proved that a locally conformally flat simply connected, compact Miao-Tam critical metric $(M^{n},g,f)$ with boundary isometric to a standard sphere $\mathbb{S}^{n-1}$ must be isometric to a geodesic ball in a simply connected space form $\mathbb{R}^{n}$, $\mathbb{H}^{n}$ or $\mathbb{S}^{n}$. 

In order to proceed, we recall three special tensors in the study of curvature for a Riemannian manifold $(M^n,\,g),\,n\ge 3.$  The first one is the Weyl tensor $W$ which is defined by the following decomposition formula
\begin{eqnarray}
\label{weyl}
R_{ijkl}&=&W_{ijkl}+\frac{1}{n-2}\big(R_{ik}g_{jl}+R_{jl}g_{ik}-R_{il}g_{jk}-R_{jk}g_{il}\big) \nonumber\\
 &&-\frac{R}{(n-1)(n-2)}\big(g_{jl}g_{ik}-g_{il}g_{jk}\big),
\end{eqnarray}
where $R_{ijkl}$ stands for the Riemann curvature tensor, whereas the second one is the Cotton tensor $C$ given by
\begin{equation}
\label{cotton} \displaystyle{C_{ijk}=\nabla_{i}R_{jk}-\nabla_{j}R_{ik}-\frac{1}{2(n-1)}\big(\nabla_{i}R
g_{jk}-\nabla_{j}R g_{ik}).}
\end{equation} Note that $C_{ijk}$ is skew-symmetric in the first two indices and trace-free in any two indices. These two above tensors are related as follows
\begin{equation}
\label{cottonwyel} \displaystyle{C_{ijk}=-\frac{(n-2)}{(n-3)}\nabla_{l}W_{ijkl},}
\end{equation}provided $n\ge 4.$ We also recall that the Bach tensor \cite{bach} on a Riemannian manifold $(M^n,g)$, $n\geq 4,$ is defined in term of the components of the Weyl tensor $W_{ikjl}$ as follows
\begin{equation}
\label{bach} B_{ij}=\frac{1}{n-3}\nabla^{k}\nabla^{l}W_{ikjl}+\frac{1}{n-2}R_{kl}W_{i}\,^{k}\,_{j}\,^{l},
\end{equation}
while for $n=3$ it is given by
\begin{equation}
\label{bach3} B_{ij}=\nabla^kC_{kij}.
\end{equation} We say that $(M^n,g)$ is Bach-flat when $B_{ij}=0.$ It is easy to check that locally conformally flat metrics as well as Einstein metrics are Bach-flat.

Recently, Barros, Di\'{o}genes and Ribeiro \cite{BDR}, based on the techniques outlined in a work of Cao and Chen \cite{CaoChen}, proved that a Bach-flat simply connected, compact Miao-Tam critical metric with boundary isometric to a standard sphere $\mathbb{S}^{3}$ must be isometric to a geodesic ball in a simply connected space form $\mathbb{R}^{4}$, $\mathbb{H}^{4}$ or $\mathbb{S}^{4}.$ Further, they showed that in dimension three the result even is true replacing the Bach-flat condition by the weaker assumption that $M^3$ has divergence-free Bach tensor. For more details, we refer the reader to \cite{BDR}.

At the same time, Miao and Tam  \cite{miaotamTAMS} studied these critical metrics under Einstein condition. In that case, they were able to remove the condition of boundary isometric to a standard sphere. More precisely, they obtained the following result.

\begin{theorem}[Miao-Tam, \cite{miaotamTAMS}]
\label{thmMT}
Let $(M^{n},g,f)$ be a connected, compact Einstein Miao-Tam critical metric with smooth boundary $\partial M.$ Then $(M^{n},g)$ is isometric to a geodesic ball in a simply connected space form $\mathbb{R}^{n}$, $\mathbb{H}^{n}$ or $\mathbb{S}^{n}.$
\end{theorem}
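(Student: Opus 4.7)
The plan is to exploit the Einstein hypothesis to reduce the Miao--Tam equation (\ref{eqMiaoTam1}) to a concircular Hessian equation for $f$, which then rigidly determines the geometry.

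Setting $Ric = (R/n)g$ in (\ref{eqMiaoTam1}) and taking the trace yields $\Delta f = -(fR+n)/(n-1)$; feeding this back into (\ref{eqMiaoTam1}) gives the key identity
\begin{equation*}
Hess\, f = \psi\, g, \qquad \psi = -\frac{fR+n}{n(n-1)}.
\end{equation*}
Two consequences follow immediately. First, $\nabla|\nabla f|^{2} = 2\psi\,\nabla f$, so $|\nabla f|^{2}$ is constant on connected components of level sets of $f$; this makes $f$ an isoparametric function whose regular level sets are totally umbilical. Second, differentiating $Hess\, f = \psi\, g$ and commuting covariant derivatives via the Ricci identity gives
\begin{equation*}
R_{ikjl}\nabla^{l} f = (\nabla_{i}\psi)\,g_{kj} - (\nabla_{k}\psi)\,g_{ij},
\end{equation*}
which, after inserting $\nabla\psi = -\frac{R}{n(n-1)}\nabla f$ together with the Einstein form of the Weyl decomposition (\ref{weyl}), collapses to $W_{ikjl}\nabla^{l}f = 0$ on all of $M$.

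Next, on the open set $U = \{\nabla f \neq 0\}$ (dense, since $f$ is non-constant by the boundary condition $f^{-1}(0) = \partial M$), parametrize by the flow of $\nabla f/|\nabla f|$ to obtain coordinates $(t,y)$ in which $g|_{U} = dt^{2} + h(t)^{2} g_{\Sigma}$, where $\Sigma$ is any fixed regular level set and $h$ is determined by $|\nabla f|$ via the change of variable $dt = df/|\nabla f|$. The Einstein condition on this warped product forces $(\Sigma, g_{\Sigma})$ to be Einstein and $h$ to satisfy an ODE of space-form type. Combined with $W(\cdot,\cdot,\cdot,\nabla f)=0$ and the decomposition of the Weyl tensor under a warped product splitting, one concludes that $(\Sigma, g_{\Sigma})$ has constant sectional curvature, and hence so does $(U, g)$, with the value $R/(n(n-1))$ dictated by the trace of the Einstein equation.

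Finally, $Hess\, f = \psi g$ with $\psi$ a function of $f$ forces the critical set of $f$ to be discrete; combining with the compactness of $M$ and $f^{-1}(0) = \partial M$, there is exactly one interior critical point, an absolute maximum that will serve as the center of the geodesic ball. The boundary $\partial M$ is then the regular level set $\{f = 0\}$, a geodesic sphere in the ambient space form, which identifies $(M^{n},g)$ with a geodesic ball in $\mathbb{R}^{n}$, $\mathbb{H}^{n}$, or $\mathbb{S}^{n}$ according to the sign of $R$. The most delicate step is verifying that the warped product structure on $U$ extends smoothly across the interior critical point; the real-analyticity of Einstein metrics, together with the quadratic leading behaviour of $f$ near its maximum (forced by $Hess\, f = \psi g$ with $\psi(f_{\max}) \neq 0$), is what makes the center well-defined and pins down the model space form uniquely.
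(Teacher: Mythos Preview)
The paper does not contain a proof of this statement: Theorem~\ref{thmMT} is quoted verbatim from Miao and Tam \cite{miaotamTAMS} and used only as a black box in the proof of Corollary~\ref{corA}. There is therefore no ``paper's own proof'' to compare your proposal against.

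That said, your outline is essentially the argument Miao and Tam give in \cite{miaotamTAMS}. The reduction $Hess\,f=\psi g$ with $\psi=-(fR+n)/\big(n(n-1)\big)$ is correct, as is the computation $W_{ikjl}\nabla^{l}f=0$. From there the problem becomes a classical one on concircular functions in the sense of Tashiro/Obata; the warped-product picture and the analysis of the single interior critical point are the standard route. One small caution: your justification that the critical set is discrete and the extension across the maximum is smooth is a bit breezy. You do not actually need real-analyticity here; the ODE satisfied by $f$ along geodesics through a critical point (namely $f''=\psi(f)$) already forces a single nondegenerate maximum, and the standard polar-coordinate argument from Obata/Tashiro shows the metric extends to the model form. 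Also note that $\psi(f_{\max})\neq 0$ must be checked: if $\psi(f_{\max})=0$ then $f$ would be constant near the maximum, contradicting the overdetermined system; this follows since $\psi=0$ and $\nabla f=0$ together with $Hess\,f=\psi g$ force all derivatives of $f$ to vanish at that point, while $f$ satisfies an elliptic equation with analytic coefficients on an Einstein manifold.
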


For what follows, we point out that every Riemannian manifold with parallel Ricci tensor has harmonic curvature. But, the converse statement is not true, see \cite{derd} and \cite{derd2} for more details. Indeed, there are examples of compact and noncompact Riemannian manifolds with harmonic curvature but non-parallel Ricci tensor. Here, motivated by the historical development on the study of critical metrics of the volume functional, we shall replace the assumption of Einstein in the Miao-Tam result (cf. Theorem \ref{thmMT}) by the parallel Ricci tensor condition, which is weaker that the former. In order to do so, we have established the following result.

\begin{theorem}\label{thmMain}
Let $(M^{n},g,f)$ be a compact, oriented, connected Miao-Tam critical metric with smooth boundary $\partial M.$ Then we have:
\begin{equation*}
\int_{M} f|divRm|^{2}dM_{g}+\frac{n}{n-1}\int_{M}|Ric-\frac{R}{n}g|^{2}dM_{g}+\int_{M}f(\Delta|Ric|^{2}-|\nabla Ric|^{2})dM_{g}=0.
\end{equation*}
\end{theorem}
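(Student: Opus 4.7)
The plan is to convert $\int_{M}f|\mathrm{div}\,Rm|^{2}\,dM_{g}$ into the remaining two integrals via a pointwise identity for the Cotton tensor, followed by integration by parts; the boundary condition $f|_{\partial M}=0$ kills the boundary terms that arise.

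I would first collect the basic facts. A Miao-Tam critical metric has constant scalar curvature $R$, so tracing (\ref{eqMiaoTam1}) yields $\Delta f=-\tfrac{fR+n}{n-1}$ and $\nabla\Delta f=-\tfrac{R}{n-1}\nabla f$; moreover, constancy of $R$ together with the contracted second Bianchi identity gives $\nabla^{l}R_{ijkl}=\nabla_{i}R_{jk}-\nabla_{j}R_{ik}=C_{ijk}$, whence $|\mathrm{div}\,Rm|^{2}=|C|^{2}$. Next I would rewrite (\ref{eqMiaoTam1}) as $\nabla_{i}\nabla_{j}f=(1+\Delta f)g_{ij}+fR_{ij}$, apply $\nabla_{k}$, and antisymmetrize in $(k,i)$; the Ricci identity $[\nabla_{k},\nabla_{i}]\nabla_{j}f=-R_{ljki}\nabla^{l}f$ appearing on the left lets the Cotton tensor $fC_{kij}$ assemble on the right, producing the key pointwise identity
\[
fC_{kij}=-R_{ljki}\nabla^{l}f+\tfrac{R}{n-1}\bigl(\nabla_{k}f\,g_{ij}-\nabla_{i}f\,g_{kj}\bigr)-\bigl(\nabla_{k}f\,R_{ij}-\nabla_{i}f\,R_{kj}\bigr).
\]

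Contracting with $C^{kij}$ and integrating, the trace-freeness of $C$ in every pair of indices kills the $g$-terms, and the skew-symmetry $C^{kij}=-C^{ikj}$ together with the symmetry of $Ric$ merges the two Ricci terms into $-2\int_{M}\nabla^{k}f\,R^{ij}C_{kij}\,dM_{g}$. Substituting the Weyl decomposition (\ref{weyl}) into the Riemann term collapses its trace components against $C$, again by trace-freeness, leaving a Weyl integral plus a further multiple of $\int\nabla^{k}f\,R^{ij}C_{kij}\,dM_{g}$. An integration by parts on the $\nabla^{k}f$-weighted Ricci integral---boundary-free since $f|_{\partial M}=0$---combined with the algebraic identity $\nabla^{k}R^{ij}C_{kij}=\tfrac{1}{2}|C|^{2}$ (which follows directly from $C_{kij}=\nabla_{k}R_{ij}-\nabla_{i}R_{kj}$), re-expresses everything in terms of $\int_{M}f|C|^{2}\,dM_{g}$ and $\int_{M}f\,R^{ij}\nabla^{k}C_{kij}\,dM_{g}$. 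For the latter I would use the Ricci identity to write $\nabla^{k}C_{kij}=\Delta R_{ij}+(\text{Weyl-Ricci quadratic})_{ij}$, and then invoke the Bochner formula $\tfrac{1}{2}\Delta|Ric|^{2}=|\nabla Ric|^{2}+R^{ij}\Delta R_{ij}$, which produces exactly the combination $\int_{M}f(\Delta|Ric|^{2}-|\nabla Ric|^{2})\,dM_{g}$ appearing in the theorem.

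The $f$-free term $\tfrac{n}{n-1}\int_{M}|Ric-\tfrac{R}{n}g|^{2}\,dM_{g}$ emerges when Green's identity is applied to $\int_{M}f\Delta|Ric|^{2}\,dM_{g}$ (the boundary term $\int_{\partial M}|Ric|^{2}\partial_{\nu}f$ is the only one surviving, since $f|_{\partial M}=0$) and $\Delta f=-(fR+n)/(n-1)$ is substituted; this yields a $-\tfrac{n}{n-1}\int|Ric|^{2}\,dM_{g}$ piece that combines with a $\tfrac{R^{2}V}{n-1}$ remainder---supplied via $|Ric-\tfrac{R}{n}g|^{2}=|Ric|^{2}-R^{2}/n$---to form exactly $\tfrac{n}{n-1}\int|Ric-\tfrac{R}{n}g|^{2}\,dM_{g}$. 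The principal obstacle will be verifying that the Weyl-tensor integral and the various curvature-quadratic residues (from the commutator $\nabla^{k}C_{kij}-\Delta R_{ij}$ and from the Weyl decomposition of $R_{ljki}$) cancel exactly, so that no $W$- or $\mathrm{Bach}$-dependent terms survive: this should follow from a second integration by parts together with the Cotton-Weyl relation (\ref{cottonwyel}), all boundary contributions again vanishing thanks to $f|_{\partial M}=0$.
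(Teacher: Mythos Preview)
Your route differs substantially from the paper's, and the step you yourself flag as the ``principal obstacle'' is a genuine gap, not a routine check. By inserting the Weyl decomposition at the Riemann-term stage you create the integral $\int_M W_{ljki}\nabla^lf\,C^{kij}\,dM_g$ together with the cubic residues $\int_M f\,\mathrm{tr}(Ric^3)\,dM_g$ and $\int_M f\,R_{kijs}R^{ks}R^{ij}\,dM_g$ coming from $\nabla^kC_{kij}$, and you then need all of these to cancel exactly, with the correct $(n-1)/(n-2)$ coefficients collapsing to $1$. You do not carry this out, and your proposed mechanism (a second integration by parts plus the Cotton--Weyl relation~(\ref{cottonwyel})) cannot work uniformly: in dimension $n=3$ that relation is unavailable, Weyl vanishes identically, yet the cubic terms persist. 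Your account of the $f$-free term is also off: applying Green's identity to $\int_M f\Delta|Ric|^2\,dM_g$ would \emph{remove} that integral from the formula (the theorem keeps it) and would leave the boundary term $\int_{\partial M}|Ric|^2\partial_\nu f$ uncancelled.

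The paper avoids all of this by never decomposing Riemann into Weyl. It derives \emph{two} expressions for $\int_M f|\mathrm{div}\,Rm|^2\,dM_g$: one (Lemma~\ref{L2}) by expanding $|\mathrm{div}\,Rm|^2=2|\nabla Ric|^2-2\nabla_kR_{lj}\nabla_lR_{kj}$ and integrating the cross term by parts via the Ricci commutator, the other (Lemma~\ref{L3}) by substituting $fR_{ik}=\nabla_i\nabla_kf-(\Delta f+1)g_{ik}$ into $\int_M fR_{ijkl}R_{jl}R_{ik}\,dM_g$ and invoking Lemma~\ref{L1}. Both carry the terms $\int_M f(R_{ij}R_{ik}R_{jk}-R_{ijkl}R_{jl}R_{ik})\,dM_g$ and $\int_M C_{klj}\nabla_lf R_{jk}\,dM_g$, but with opposite signs, so adding the two formulas cancels them outright. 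One more use of Lemma~\ref{L1} on the surviving divergence term, followed by a direct substitution of $\nabla_i\nabla_jf$ and $\Delta f$ from the critical equation, produces the $f$-free piece $\tfrac{n}{n-1}\int_M|\mathring{Ric}|^2\,dM_g$ without any Green's identity or leftover boundary integral.
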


In the sequel, as an application of Theorem \ref{thmMain} we get the following rigidity result.

\begin{corollary}\label{corA}
Let $(M^n,\,g,\,f)$ be a compact, oriented, connected Miao-Tam critical metric with parallel Ricci tensor and smooth boundary $\partial M.$ Then $(M^n,\,g)$ is isometric to a geodesic ball in a simply connected space form $\Bbb{R}^{n},$ $\Bbb{H}^{n},$ or $\Bbb{S}^n.$
\end{corollary}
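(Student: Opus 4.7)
The plan is to apply the integral identity of Theorem \ref{thmMain} directly to the case $\nabla Ric \equiv 0$ and show that each of the three terms either vanishes or forces the metric to be Einstein, at which point Theorem \ref{thmMT} of Miao--Tam closes the argument.

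First I would observe that under the parallel Ricci assumption, the third integrand vanishes identically: indeed $\nabla Ric = 0$ gives $|\nabla Ric|^2 \equiv 0$, and $|Ric|^2 = g^{ij}g^{kl}R_{ik}R_{jl}$ is then a constant function, so $\Delta |Ric|^2 \equiv 0$. Hence $\int_M f(\Delta|Ric|^2 - |\nabla Ric|^2)\,dM_g = 0$ regardless of the sign of $f$.

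Next I would handle the $|\mathrm{div}\,Rm|^2$ term using the contracted second Bianchi identity
\[
\nabla^l R_{ijkl} \;=\; \nabla_i R_{jk} - \nabla_j R_{ik}.
\]
Since $\nabla Ric \equiv 0$, the right-hand side is zero, so $\mathrm{div}\,Rm \equiv 0$ and the first integrand vanishes. Substituting these two simplifications into Theorem \ref{thmMain} leaves
\[
\frac{n}{n-1}\int_{M} \Bigl| Ric - \tfrac{R}{n}g \Bigr|^{2}\,dM_g = 0.
\]
Since the integrand is nonnegative, $Ric \equiv \tfrac{R}{n}g$ pointwise on $M$; that is, $(M^n,g)$ is Einstein. (Recall that every Miao--Tam critical metric has constant scalar curvature, so $\tfrac{R}{n}$ is a genuine constant and the Einstein condition is consistent with the system \eqref{eqMiaoTam1}.)

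Finally I would invoke Theorem \ref{thmMT} of Miao and Tam: an Einstein, connected, compact Miao--Tam critical metric with smooth boundary is isometric to a geodesic ball in one of $\mathbb{R}^n$, $\mathbb{H}^n$ or $\mathbb{S}^n$, which is exactly the conclusion of the corollary. There is essentially no obstacle here once Theorem \ref{thmMain} is in hand; the only point requiring care is the sign/vanishing bookkeeping for the $f$-weighted terms, which is trivialized by the fact that both weighted integrands become identically zero rather than merely having a definite sign.
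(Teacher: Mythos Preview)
Your proof is correct and follows essentially the same route as the paper: reduce the three terms of Theorem~\ref{thmMain} using $\nabla Ric=0$ (hence $\mathrm{div}\,Rm=0$ and $|Ric|^2$ constant), conclude the metric is Einstein, and then invoke Theorem~\ref{thmMT}. The only cosmetic difference is that the paper cites Kato's inequality to see that $|Ric|$ is constant, whereas you obtain this directly from $\nabla|Ric|^2=2\langle\nabla Ric,Ric\rangle=0$, which is in fact the cleaner observation.
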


It is easy to check that Einstein manifolds $M^n,$ $n\geq 3,$ have parallel Ricci tensor. Therefore, Corollary \ref{corA} clearly improves Theorem \ref{thmMT}. Moreover, it is worth pointing out that our arguments designed for the proof of Theorem \ref{thmMain} differ significantly from \cite{miaotamTAMS}.

Let us also highlight that every Einstein manifold has harmonic Weyl tensor (cf. \cite{derd}). Furthermore, we already know that there is no relationship between Bach-flat condition, considered in \cite{BDR}, and the condition that $M^n$ has harmonic Weyl tensor. Therefore, it is natural to ask which geometric implications has the assumption of the harmonicity of the Weyl tensor on a Miao-Tam critical metric. By using once more Theorem \ref{thmMain} jointly with Theorem \ref{thmMT}, it is straightforward to check that if a compact, oriented, connected Miao-Tam critical metric with harmonic Weyl tensor and smooth boundary satisfies 

\begin{equation}
\label{plk}
\int_{M} f\Delta|Ric|^{2}dM_{g}\geq\int_{M}f|\nabla Ric|^{2}dM_{g},
\end{equation} then $(M^n,\,g)$ is isometric to a geodesic ball in a simply connected space form $\Bbb{R}^{n},$ $\Bbb{H}^{n}$ or $\Bbb{S}^n.$ Nonetheless, it is very interesting to prove that condition (\ref{plk}) can be removed.

\section{Preliminaries}
\label{Preliminaries}

In this section we shall present some preliminaries which will be useful for the
establishment of the desired results. Firstly, we recall that the fundamental equation of a Miao-Tam critical metric (\ref{eqMiaoTam}) becomes
\begin{equation}\label{eq:miaotam}
-(\Delta f)g+Hessf-fRic=g.
\end{equation} In particular, tracing (\ref{eq:miaotam}) we have
\begin{equation}\label{eqtrace}
(n-1)\Delta f+Rf=-n.
\end{equation} For sake of simplicity, we now rewrite equation (\ref{eq:miaotam}) in the tensorial language as follows
\begin{equation}\label{eq:tensorial}
-(\Delta f)g_{ij}+\nabla_{i}\nabla_{j}f-fR_{ij}=g_{ij}.
\end{equation} Moreover, by using (\ref{eqtrace}) it is not difficult to check that
\begin{equation}
\label{IdRicHess} f\mathring{Ric}=\mathring{Hess f},
\end{equation} where $\mathring{T}$ stands for the traceless of $T.$

For our purpose we also remember that as consequence of Bianchi identity we have
\begin{equation}\label{Bianchi}
(div Rm)_{jkl}=\nabla_kR_{jl}-\nabla_lR_{jk}.
\end{equation}

Under these notations we get the following lemma obtained previously in \cite{BDR}.

\begin{lemma}[\cite{BDR}]
\label{L1}
Let $(M^{n},g,f)$ be a Miao-Tam critial metric. Then we have:

$$f(\nabla_{i}R_{jk}-\nabla_{j}R_{ik})=R_{ijkl}\nabla_{l}f+\frac{R}{n-1}(\nabla_{i}fg_{jk}-\nabla_{j}fg_{ik})-(\nabla_{i}fR_{jk}-\nabla_{j}f R_{ik}).$$
\end{lemma}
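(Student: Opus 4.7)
The proof is a direct consequence of differentiating the Miao-Tam equation, commuting covariant derivatives, and exploiting the constancy of the scalar curvature proved by Miao-Tam. My plan is as follows.

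First, I rewrite the tensorial form (\ref{eq:tensorial}) as
\begin{equation*}
\nabla_i\nabla_j f \;=\; fR_{ij} + (1+\Delta f)g_{ij},
\end{equation*}
and differentiate covariantly in a third direction $k$. Applying the Leibniz rule gives
\begin{equation*}
\nabla_k\nabla_i\nabla_j f \;=\; \nabla_k f\, R_{ij} + f\nabla_k R_{ij} + \nabla_k(\Delta f)\,g_{ij}.
\end{equation*}
Next, I produce the companion identity by interchanging $k\leftrightarrow i$ and subtract the two. The right-hand side yields three pieces: the Ricci-times-gradient difference $\nabla_k f\,R_{ij}-\nabla_i f\,R_{kj}$, the target piece $f(\nabla_k R_{ij}-\nabla_i R_{kj})$, and the unwanted Laplacian piece $\nabla_k(\Delta f)g_{ij}-\nabla_i(\Delta f)g_{kj}$.

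The left-hand side is the commutator $(\nabla_k\nabla_i-\nabla_i\nabla_k)(\nabla_j f)$ acting on the one-form $df$, so by the Ricci commutation identity it equals $R_{kijl}\nabla^l f$ (the sign is fixed by the convention used in (\ref{weyl}), which assigns positive constant sectional curvature to $\mathbb{S}^n$). To dispose of the $\nabla(\Delta f)$ terms I use the trace equation (\ref{eqtrace}): since Miao-Tam critical metrics have constant scalar curvature $R$, differentiating $(n-1)\Delta f+Rf=-n$ gives
\begin{equation*}
\nabla_k(\Delta f) \;=\; -\frac{R}{n-1}\,\nabla_k f.
\end{equation*}
Substituting this converts the Laplacian contribution into precisely $-\tfrac{R}{n-1}(\nabla_k f\,g_{ij}-\nabla_i f\,g_{kj})$, which is moved to the right-hand side with the opposite sign.

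Solving for $f(\nabla_k R_{ij}-\nabla_i R_{kj})$ and relabeling indices $(k,i,j)\mapsto(i,j,k)$ produces exactly the asserted identity. The main (and only real) obstacle is bookkeeping: one must keep the Ricci commutation sign consistent with the Riemann convention of (\ref{weyl}), and one must remember that the constant-scalar-curvature fact from Miao-Tam is what makes $\nabla(\Delta f)$ reduce cleanly to a multiple of $\nabla f$; without this, an extra term proportional to $f\nabla R$ would survive and the clean form of the lemma would fail.
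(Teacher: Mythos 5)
Your proposal is correct and follows essentially the same route as the paper: differentiate the fundamental equation $\nabla_i\nabla_j f = fR_{ij}+(1+\Delta f)g_{ij}$, antisymmetrize, apply the Ricci commutation identity to the third covariant derivative of $f$, and use the constancy of $R$ (via equation (\ref{eqtrace})) to reduce $\nabla(\Delta f)$ to $-\tfrac{R}{n-1}\nabla f$. The only difference is cosmetic index bookkeeping, and your attention to the sign convention in the commutator matches what the paper implicitly uses.
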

\begin{proof} Since the proof of this lemma is very short, we include it here for the sake of completeness. Firstly, we use  (\ref{eq:tensorial}) to infer

\begin{equation}
\label{eq1lem1}
(\nabla_{i}f)R_{jk}+f\nabla_{i}R_{jk}=\nabla_{i}\nabla_{j}\nabla_{k}f-(\nabla_{i}\Delta f)g_{jk}.
\end{equation} Next, since $M^n$ has constant scalar curvature we have from (\ref{eqtrace}) that $$\nabla_{i}\Delta f=-\frac{R}{n-1}\nabla_{i}f,$$ which substituted into (\ref{eq1lem1}) gives 

\begin{equation}
\label{eq2lem2}
f\nabla_{i}R_{jk}=-(\nabla_{i}f)R_{jk}+\nabla_{i}\nabla_{j}\nabla_{k}f+\frac{R}{n-1}\nabla_{i}fg_{jk}.
\end{equation} Finally, it suffices to apply the Ricci identity to arrive at
\begin{eqnarray*}
f(\nabla_{i}R_{jk}-\nabla_{j}R_{ik})=R_{ijkl}\nabla_{l}f+\frac{R}{n-1}(\nabla_{i}f g_{jk}-\nabla_{j}f g_{ik})-(\nabla_{i}f R_{jk}-\nabla_{j}f R_{ik}),
\end{eqnarray*} as we wanted to prove.
\end{proof}

To conclude this section we recall that, from commutation formulas (Ricci
identities), for any Riemannian manifold $M^n$ we have
\begin{equation}\label{idRicci}
\nabla_i\nabla_j R_{ik}-\nabla_j\nabla_i R_{ik}=R_{ijis}R_{sk}+R_{ijks}R_{is},
\end{equation} for more details see \cite{chow}.

\section{Proof of the Main Result}

In this section we shall prove Theorem \ref{thmMain} announced in Section \ref{intro}. First of all, we shall present a fundamental integral formula.

\begin{lemma}\label{L2}
Let $(M^{n},g,f)$ be a Miao-Tam critial metric. Then we have:
\begin{eqnarray*}
\int_{M} f|divRm|^{2}dM_{g}&=&2\int_{M} f|\nabla Ric|^{2}dM_{g}+\int_{M}\langle\nabla f, \nabla|Ric|^{2}\rangle dM_{g}\\&&
+2\int_{M}f (R_{ij}R_{ik}R_{jk}-R_{ijkl}R_{jl}R_{ik})dM_{g}\\&&+2\int_{M}C_{klj}\nabla_{l}f R_{jk}dM_{g}.
\end{eqnarray*}
\end{lemma}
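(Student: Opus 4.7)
The plan is to combine the Bianchi identity (\ref{Bianchi}), which gives $(divRm)_{jkl}=\nabla_{k}R_{jl}-\nabla_{l}R_{jk}$, with integration by parts, using crucially that $f|_{\partial M}=0$ and that $R$ is constant. Expanding the square,
\begin{equation*}
|divRm|^{2}=2|\nabla Ric|^{2}-2\nabla_{k}R_{jl}\nabla_{l}R_{jk},
\end{equation*}
so multiplying by $f$ and integrating over $M$ reduces the entire claim to evaluating the cross term $\int_{M}f\nabla_{k}R_{jl}\nabla_{l}R_{jk}\,dM_{g}$.

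For that cross term I integrate by parts, moving $\nabla_{l}$ off $R_{jk}$; since $f$ vanishes on $\partial M$ there is no boundary contribution, and the integral splits as
\begin{equation*}
-\int_{M}(\nabla_{l}f)(\nabla_{k}R_{jl})R_{jk}\,dM_{g}-\int_{M}f(\nabla_{l}\nabla_{k}R_{jl})R_{jk}\,dM_{g}.
\end{equation*}
For the first summand, the constant scalar curvature assumption reduces the Cotton tensor from (\ref{cotton}) to $C_{lkj}=\nabla_{l}R_{kj}-\nabla_{k}R_{lj}$, so $\nabla_{k}R_{jl}=\nabla_{l}R_{jk}-C_{lkj}$. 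Substituting this and using $\nabla_{l}|Ric|^{2}=2R_{jk}\nabla_{l}R_{jk}$ produces the $\langle\nabla f,\nabla|Ric|^{2}\rangle$ piece together with a Cotton contribution which, by the antisymmetry $C_{lkj}=-C_{klj}$, becomes precisely $2\int_{M}C_{klj}(\nabla_{l}f)R_{jk}\,dM_{g}$ with the correct sign.

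For the second summand, I swap the order of derivatives. The twice-contracted Bianchi identity together with the constancy of $R$ forces $\nabla_{l}R_{jl}=\tfrac{1}{2}\nabla_{j}R=0$, hence $\nabla_{k}\nabla_{l}R_{jl}=0$ and $\nabla_{l}\nabla_{k}R_{jl}=[\nabla_{l},\nabla_{k}]R_{jl}$. Applying the Ricci identity (\ref{idRicci}) with the substitution $(i,j,k)\mapsto(l,k,j)$ and the symmetry of the Ricci tensor yields
\begin{equation*}
\nabla_{l}\nabla_{k}R_{jl}=R_{lkls}R_{sj}+R_{lkjs}R_{ls}.
\end{equation*}
Contracting with $R_{jk}$, the first summand simplifies to the cubic Ricci trace $R_{ij}R_{ik}R_{jk}$ via $R_{lkls}=R_{ks}$ (immediate from the Weyl decomposition (\ref{weyl})), while the second, $R_{lkjs}R_{ls}R_{jk}$, collapses to $-R_{ijkl}R_{jl}R_{ik}$ after applying the antisymmetry $R_{ikjl}=-R_{iklj}$ of the Riemann tensor in the last pair of indices and relabeling dummies.

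The main obstacle is precisely this last algebraic identification: one has to apply the Ricci identity with the sign conventions of (\ref{idRicci}), then invoke the correct Riemann antisymmetry on the appropriate pair of positions to convert $R_{lkjs}R_{ls}R_{jk}$ into $-R_{ijkl}R_{jl}R_{ik}$. Once these manipulations are in place, the four pieces assemble directly into the claimed integral formula.
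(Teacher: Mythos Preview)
Your proposal is correct and follows essentially the same route as the paper: expand $|divRm|^{2}$ via the Bianchi identity, integrate the cross term by parts (the divergence/boundary term vanishes since $f|_{\partial M}=0$), rewrite $\nabla_{k}R_{jl}$ using the Cotton tensor to extract the $\langle\nabla f,\nabla|Ric|^{2}\rangle$ and $C_{klj}\nabla_{l}f R_{jk}$ pieces, and handle $\nabla_{l}\nabla_{k}R_{jl}$ by commuting derivatives with the Ricci identity together with $\nabla_{l}R_{jl}=\tfrac{1}{2}\nabla_{j}R=0$. The only cosmetic differences are that the paper keeps the divergence term explicit until invoking Stokes's formula, and it writes $\nabla_{k}R_{lj}=\nabla_{l}R_{kj}+C_{klj}$ directly rather than passing through $C_{lkj}=-C_{klj}$.
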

\begin{proof} Firstly, we use (\ref{Bianchi}) to infer 
\begin{eqnarray*}
\int_{M} f|divRm|^{2}dM_{g}&=&\int_{M} f(divRm)_{jkl}(divRm)_{jkl} dM_{g}\\
&=&\int_{M} f(\nabla_{k} R_{lj}-\nabla_{l}R_{kj})(\nabla_{k} R_{lj}-\nabla_{l}R_{kj})dM_{g},
\end{eqnarray*} which can be rewritten as 

\begin{eqnarray}
\label{n1}
\int_{M} f|divRm|^{2}dM_{g}&=&2\int_{M} f|\nabla Ric|^{2}dM_{g}-2\int_{M} f\nabla_{k}R_{lj}\nabla_{l}R_{kj}dM_{g}.
\end{eqnarray} Next, notice that

\begin{eqnarray*}
\int_{M} f\nabla_{k}R_{lj}\nabla_{l}R_{kj}dM_{g}&=&-\int_{M}\nabla_{l}f\nabla_{k}R_{lj}R_{kj}dM_{g}-\int_{M} f\nabla_{l}\nabla_{k}R_{lj}R_{kj}dM_{g}\\&&+\int_{M}\nabla_{l}(f\nabla_{k}R_{lj}R_{kj})dM_{g}.
\end{eqnarray*} Hence we use this data in (\ref{n1}) to deduce

\begin{eqnarray}
\label{n2}
\int_{M} f|divRm|^{2}dM_{g}&=&2\int_{M} f|\nabla Ric|^{2}dM_{g}+2\int_{M}\nabla_{l}f\nabla_{k}R_{lj}R_{kj}dM_{g}\nonumber\\&&+2\int_{M} f\nabla_{l}\nabla_{k}R_{lj}R_{kj}dM_{g}-2\int_{M}\nabla_{l}(f\nabla_{k}R_{lj}R_{kj})dM_{g}.
\end{eqnarray} Since $M^n$ has constant scalar curvature we may use (\ref{cotton}) to infer

$$C_{klj}=\nabla_{k}R_{lj}-\nabla_{l}R_{kj},$$ which substituted jointly with (\ref{idRicci}) into (\ref{n2}) gives

\begin{eqnarray*}
\int_{M} f|divRm|^{2}dM_{g} &=&2\int_{M} f|\nabla Ric|^{2}dM_{g}+2\int_{M} \nabla_{l}f(\nabla_{l}R_{kj}+C_{klj})R_{kj}dM_{g}\\
&&+2\int_{M} f(\nabla_{k}\nabla_{l}R_{lj}+R_{lkls}R_{sj}+R_{lkjs}R_{ls})R_{kj}dM_{g}\\
&&-2\int_{M}\nabla_{l}(f\nabla_{k}R_{lj}R_{kj})dM_{g}.
\end{eqnarray*}

Now, since $f$ vanishes on the boundary, we apply Stokes's formula and the twice contracted second Bianchi identity to arrive at

\begin{eqnarray*}
\int_{M} f|divRm|^{2}dM_{g} &=&2\int_{M} f|\nabla Ric|^{2}dM_{g}+\int_{M}\langle\nabla f, \nabla|Ric|^{2}\rangle dM_{g}\\&&+2\int_{M} \nabla_{l}f C_{klj}R_{kj}dM_{g}\\
&&+2\int_{M}f (R_{ks}R_{sj}R_{kj}-R_{kljs}R_{ls}R_{kj})dM_{g}.
\end{eqnarray*}

Finally, it suffices to change the indices of the last integral of the right hand side to obtain

\begin{eqnarray*}
\int_{M} f|divRm|^{2}dM_{g} &=&2\int_{M} f|\nabla Ric|^{2}dM_{g}+\int_{M}\langle\nabla f, \nabla|Ric|^{2}\rangle dM_{g}\\&&+2\int_{M}C_{klj}\nabla_{l}f R_{jk}dM_{g}\\&&+2\int_{M}f (R_{ij}R_{ik}R_{jk}-R_{ijkl}R_{jl}R_{ik})dM_{g}.
\end{eqnarray*} So, the proof is completed.

\end{proof}

Before presenting the proof of Theorem \ref{thmMain}, we provide another expression for $\int_{M} f | divRm |^{2}dM_{g}.$ More precisely, we have established the following lemma.

\begin{lemma}\label{L3}
Let $(M^{n},g,f)$ be a Miao-Tam critial metric. Then we have:
\begin{eqnarray*}
\int_{M} f|divRm|^{2}dM_{g}&=&-\int_{M}\langle\nabla f, \nabla|Ric|^{2}\rangle dM_{g}-2\int_{M}C_{klj}\nabla_{l}f R_{jk}dM_{g}\\
&&-2\int_{M}f (R_{ij}R_{ik}R_{jk}-R_{ijkl}R_{jl}R_{ik})dM_{g}\\
&&+2\int_{M}\nabla_{i}(\nabla_{j}R_{ik}R_{jk})dM_{g}-2\int_{M}\nabla_{i}(R_{ijkl}R_{jl}\nabla_{k}f)dM_{g}.
\end{eqnarray*}
\end{lemma}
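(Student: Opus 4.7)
The strategy is to mirror the derivation of Lemma \ref{L2} but substitute Lemma \ref{L1} \emph{on one factor} rather than integrating by parts. Since the scalar curvature is constant, $(divRm)_{jkl}=\nabla_kR_{lj}-\nabla_lR_{kj}=C_{klj}$, so one can apply Lemma \ref{L1} to the factor $fC_{klj}$ to get
\[
fC_{klj}=R_{klji}\nabla_if+\tfrac{R}{n-1}(\nabla_kf\,g_{lj}-\nabla_lf\,g_{kj})-(\nabla_kf\,R_{lj}-\nabla_lf\,R_{kj}).
\]
Multiplying against the other copy $C_{klj}=\nabla_kR_{lj}-\nabla_lR_{kj}$ and integrating, the scalar-curvature piece drops out by trace-freeness of $C$, and the $(k,l)$-antisymmetry of $C$ and of $R_{klji}$ collapses the expansion to
\[
\int_M f|divRm|^2\,dM_g=2\int_M R_{klji}\nabla_if\,\nabla_kR_{lj}\,dM_g-\int_M\langle\nabla f,\nabla|Ric|^2\rangle\,dM_g+2\int_M\nabla_lf\,R_{kj}\nabla_kR_{lj}\,dM_g.
\]

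Next I would integrate by parts the $\nabla_k$ derivative in both the first and third integrals, this time \emph{keeping} the resulting divergences rather than discarding them using $f=0$ on $\partial M$. Each integration by parts spawns three pieces: a divergence term; a piece containing $\nabla_kR_{klji}$ or $\nabla_kR_{kj}$, handled by the contracted second Bianchi identity $\nabla^kR_{klji}=\nabla_jR_{li}-\nabla_iR_{lj}$ or by $\nabla_kR_{kj}=\tfrac12\nabla_jR=0$; and a Hessian piece $\nabla_k\nabla_if$ or $\nabla_k\nabla_lf$, converted by the Miao--Tam equation $\nabla_k\nabla_if=(\Delta f+1)g_{ki}+fR_{ki}$ combined with the contractions $g^{ki}R_{klji}=-R_{lj}$ and $R_{lj}R_{klji}R_{ki}=-R_{ijkl}R_{jl}R_{ik}$ into $(\Delta f+1)|Ric|^2$ scalar contributions plus the desired curvature cubics.

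Collecting everything, the two $(\Delta f+1)|Ric|^2$ scalars cancel between the two integration-by-parts computations, the cubic terms add up to $-2\int_M f(R_{ij}R_{ik}R_{jk}-R_{ijkl}R_{jl}R_{ik})\,dM_g$, and a leftover ``cross'' term $-2\int_M R_{lj}\nabla_jR_{li}\nabla_if\,dM_g$ is recast through $\nabla_kR_{lj}=C_{klj}+\nabla_lR_{kj}$ as $-2\int_M C_{klj}\nabla_lf\,R_{jk}\,dM_g-\int_M\langle\nabla f,\nabla|Ric|^2\rangle\,dM_g$. After combining the various $\langle\nabla f,\nabla|Ric|^2\rangle$ contributions, the statement of the lemma emerges, with the surviving divergences matching the boundary terms in the display after Riemann-antisymmetry rearrangements such as $R_{ijkl}\nabla_lf\,R_{jk}=-R_{ijkl}R_{jl}\nabla_kf$. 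The main obstacle is bookkeeping: tracking signs and index permutations through the several integrations by parts, the commutator terms from the Ricci identity (\ref{idRicci}), and the final Riemann symmetries that put the divergences in the form stated.
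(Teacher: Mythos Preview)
Your strategy is correct and leads to the stated identity; the computation checks out. One small remark: you list the Ricci commutator identity (\ref{idRicci}) among the needed tools, but in fact it plays no role in your argument---the only second derivatives that appear are $\nabla_k\nabla_i f$, which you replace directly via the Miao--Tam equation, and $\nabla_k R_{klji}$, which is handled by the once-contracted Bianchi identity. So the bookkeeping is slightly lighter than you suggest.

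The paper reaches the same endpoint by a mirror-image route. Rather than starting from $\int_M f|divRm|^2$ and applying Lemma~\ref{L1} to one Cotton factor, it begins from the cubic term $\int_M f R_{ijkl}R_{jl}R_{ik}$, replaces $fR_{ik}$ via the Miao--Tam equation $\nabla_i\nabla_k f=(\Delta f+1)g_{ik}+fR_{ik}$, integrates by parts, and then invokes Lemma~\ref{L1} on the resulting $R_{ijkl}\nabla_l f(\nabla_iR_{jk}-\nabla_jR_{ik})$ to \emph{produce} the term $f|divRm|^2$, which is then isolated. Your version inverts the logic: you begin at $f|divRm|^2$, feed Lemma~\ref{L1} in immediately, and let the Miao--Tam equation appear only when the Hessian of $f$ shows up after integration by parts. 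The two arguments use the same ingredients (Lemma~\ref{L1}, the Miao--Tam equation, contracted Bianchi, and Riemann/Cotton symmetries) and the same integrations by parts, just in reversed order; yours is arguably the more direct organization since it starts from the quantity actually being computed.
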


\begin{proof}
To begin with, from (\ref{eq:tensorial}) we achieve 
\begin{eqnarray*}
\int_{M}f R_{ijkl}R_{jl}R_{ik}dM_{g}&=&\int_{M}R_{ijkl}R_{jl}(-(\Delta f+1)g_{ik}+\nabla_{i}\nabla_{k}f)dM_{g}\\
&=&-\int_{M}(\Delta f+1)|Ric|^{2}dM_{g}+\int_{M}R_{ijkl}R_{jl}\nabla_{i}\nabla_{k}f dM_{g}.
\end{eqnarray*} From here it follows that

\begin{eqnarray*}
\int_{M}f R_{ijkl}R_{jl}R_{ik}dM_{g}&=&-\int_{M}(\Delta f+1)|Ric|^{2}dM_{g}-\int_{M}\nabla_{i}R_{ijkl}R_{jl}\nabla_{k}f dM_{g}\\
&&-\int_{M}R_{ijkl}\nabla_{i}R_{jl}\nabla_{k}f dM_{g}+\int_{M}\nabla_{i}(R_{ijkl}R_{jl}\nabla_{k}f)dM_{g}.
\end{eqnarray*}
By using (\ref{Bianchi}) we have

\begin{eqnarray}
\label{eqa}
\int_{M}f R_{ijkl}R_{jl}R_{ik}dM_{g}&=&-\int_{M}(\Delta f+1)|Ric|^{2}dM_{g}-\int_{M}C_{klj}R_{jl}\nabla_{k}f dM_{g}\nonumber\\
&&-\int_{M}R_{ijkl}\nabla_{i}R_{jl}\nabla_{k}f dM_{g}+\int_{M}\nabla_{i}(R_{ijkl}R_{jl}\nabla_{k}f)dM_{g}.
\end{eqnarray}

On the other hand, it is not difficult to check that 
\begin{eqnarray*}
\int_{M}R_{ijkl}\nabla_{i}R_{jl}\nabla_{k}fdM_{g}&=&\frac{1}{2}\Big[\int_{M}R_{ijkl}\nabla_{i}R_{jl}\nabla_{k}fdM_{g}+\int_{M}R_{ijkl}\nabla_{i}R_{jl}\nabla_{k}f dM_{g}\Big]\nonumber\\&=&\frac{1}{2}\Big[\int_{M}R_{ijkl}\nabla_{i}R_{jl}\nabla_{k}fdM_{g}+\int_{M}R_{jikl}\nabla_{j}R_{il}\nabla_{k}f dM_{g}\Big]\nonumber\\&=&\frac{1}{2}\Big[\int_{M}R_{ijkl}\nabla_{i}R_{jl}\nabla_{k}fdM_{g}-\int_{M}R_{ijkl}\nabla_{j}R_{il}\nabla_{k}f dM_{g}\Big]\nonumber\\&=&\frac{1}{2}\int_{M}R_{ijkl}\big(\nabla_{i}R_{jl}-\nabla_{j}R_{il}\big)\nabla_{k}f dM_{g}.
\end{eqnarray*} Then, we change the indices $k$ by $l$ in the right hand side to infer

$$\int_{M}R_{ijkl}\nabla_{i}R_{jl}\nabla_{k}fdM_{g}=-\frac{1}{2}\int_{M}R_{ijkl}\big(\nabla_{i}R_{jk}-\nabla_{j}R_{ik}\big)\nabla_{l}f dM_{g}.$$ This data substituted in (\ref{eqa}) yields

\begin{eqnarray}
\label{eq67}
\int_{M}f R_{ijkl}R_{jl}R_{ik}dM_{g}&=&-\int_{M}(\Delta f+1)|Ric|^{2}dM_{g}-\int_{M}C_{klj}R_{jl}\nabla_{k}f dM_{g}\nonumber\\
&&+\frac{1}{2}\int_{M}R_{ijkl}(\nabla_{i}R_{jk}-\nabla_{j}R_{ik})\nabla_{l}f dM_{g}\nonumber\\&&+\int_{M}\nabla_{i}(R_{ijkl}R_{jl}\nabla_{k}f)dM_{g}.
\end{eqnarray}

Next, it follows from Lemma \ref{L1} that
\begin{eqnarray*}
R_{ijkl}\nabla_{l}f(\nabla_{i}R_{jk}-\nabla_{j}R_{ik})&=&f|\nabla_{i}R_{jk}-\nabla_{j}R_{ik}|^{2}+(\nabla_{i}R_{jk}-\nabla_{j}R_{ik})(\nabla_{i}fR_{jk}-\nabla_{j}f R_{ik})\\&&-\frac{R}{n-1}(\nabla_{i}fg_{jk}-\nabla_{j}fg_{ik})(\nabla_{i}R_{jk}-\nabla_{j}R_{ik}).
\end{eqnarray*} Then, since $M^n$ has constant scalar curvature we can use the twice contracted second Bianchi identity to obtain

\begin{eqnarray*}
R_{ijkl}\nabla_{l}f(\nabla_{i}R_{jk}-\nabla_{j}R_{ik})&=&f|\nabla_{i}R_{jk}-\nabla_{j}R_{ik}|^{2}\nonumber\\&&+(\nabla_{i}R_{jk}-\nabla_{j}R_{ik})(\nabla_{i}fR_{jk}-\nabla_{j}f R_{ik}).
\end{eqnarray*} From this it follows that

\begin{eqnarray}
\label{lk1}
R_{ijkl}\nabla_{l}f(\nabla_{i}R_{jk}-\nabla_{j}R_{ik})&=&f|divRm|^{2}+\langle\nabla f,\nabla|Ric|^{2}\rangle-\nabla_{j}f R_{ik}\nabla_{i}R_{jk}\nonumber\\&&-\nabla_{i}f R_{jk}\nabla_{j}R_{ik}\nonumber\\
&=&f|divRm|^{2}+\langle\nabla f,\nabla|Ric|^{2}\rangle-2\nabla_{i}f R_{jk}\nabla_{j}R_{ik}.
\end{eqnarray} By using once more the twice contracted second Bianchi identity we immediately have
$$\nabla_{i}(\nabla_{j}f R_{ik}R_{jk})=\nabla_{i}\nabla_{j}f R_{ik}R_{jk}+\nabla_{i}f R_{jk}\nabla_{j}R_{ik}.$$ This combined with (\ref{lk1}) yields

\begin{eqnarray*}
R_{ijkl}\nabla_{l}f(\nabla_{i}R_{jk}-\nabla_{j}R_{ik})&=&f|divRm|^{2}+\langle\nabla f,\nabla|Ric|^{2}\rangle+2\nabla_{i}\nabla_{j}f R_{ik}R_{jk}\\&&-2\nabla_{i}(\nabla_{j}f R_{ik}R_{jk})\\
&=&f|divRm|^{2}+\langle\nabla f,\nabla|Ric|^{2}\rangle+2(f R_{ij}+(\Delta f+1)g_{ij}) R_{ik}R_{jk}\\
&&-2\nabla_{i}(\nabla_{j}f R_{ik}R_{jk}).
\end{eqnarray*} From which follows that

\begin{eqnarray*}
R_{ijkl}\nabla_{l}f(\nabla_{i}R_{jk}-\nabla_{j}R_{ik}) &=&f|divRm|^{2}+\langle\nabla f,\nabla|Ric|^{2}\rangle+2f R_{ij}R_{ik}R_{jk}\\&&+2(\Delta f+1)|Ric|^{2}-2\nabla_{i}(\nabla_{j}f R_{ik}R_{jk}).
\end{eqnarray*}

Whence, on integrating this last expression over $M^n$ and then substituting into (\ref{eq67}) we arrive at

\begin{eqnarray*}
\int_{M}f R_{ijkl}R_{jl}R_{ik}dM_{g}&=&-\int_{M}C_{klj}R_{jl}\nabla_{k}f dM_{g}+\int_{M}\nabla_{i}(R_{ijkl}R_{jl}\nabla_{k}f)dM_{g}\\&&+\frac{1}{2}\int_{M}f|divRm|^{2}dM_{g}+\frac{1}{2}\int_{M}\langle\nabla f,\nabla|Ric|^{2}\rangle dM_{g}\\&&+\int_{M}f R_{ij}R_{ik}R_{jk}dM_{g}-\int_{M}\nabla_{i}(\nabla_{j}f R_{ik}R_{jk})dM_{g}.
\end{eqnarray*} Since the Cotton tensor $C$ is skew-symmetric in the first two indices we get
\begin{eqnarray*}
\int_{M} f|divRm|^{2}dM_{g}&=&-\int_{M}\langle\nabla f, \nabla|Ric|^{2}\rangle dM_{g}-2\int_{M}C_{klj}\nabla_{l}f R_{jk}dM_{g}\\
&&-2\int_{M}f (R_{ij}R_{ik}R_{jk}-R_{ijkl}R_{jl}R_{ik})dM_{g}\\
&&+2\int_{M}\nabla_{i}(\nabla_{j}f R_{ik}R_{jk})dM_{g}-2\int_{M}\nabla_{i}(R_{ijkl}R_{jl}\nabla_{k}f)dM_{g}.
\end{eqnarray*} This gives the requested result.
\end{proof}

We are now in the position to prove Theorem \ref{thmMain}.

\subsection{Proof of Theorem \ref{thmMain}}

\begin{proof} First of all, we sum the expressions obtained in Lemma \ref{L2} and Lemma \ref{L3} to infer

\begin{eqnarray*}
\int_{M} f|divRm|^{2}dM_{g}&=&\int_{M} f|\nabla Ric|^{2}dM_{g}+\int_{M}\nabla_{i}(\nabla_{j}f R_{ik}R_{jk})dM_{g}\\&&-\int_{M}\nabla_{i}(R_{ijkl}R_{jl}\nabla_{k}f)dM_{g}.
\end{eqnarray*} Now, we change the indices $k$ by $l$ in the third integral of the right hand side to get

\begin{eqnarray}
\label{kp}
\int_{M} f|divRm|^{2}dM_{g}&=&\int_{M} f|\nabla Ric|^{2}dM_{g}+\int_{M}\nabla_{i}(\nabla_{j}f R_{ik}R_{jk})dM_{g}\nonumber\\&&+\int_{M}\nabla_{i}(R_{ijkl}\nabla_{l}f R_{jk})dM_{g}.
\end{eqnarray} Then, substituting Lemma \ref{L1} into (\ref{kp}) we deduce

\begin{eqnarray*}
\int_{M} f|divRm|^{2}dM_{g}&=&\int_{M} f|\nabla Ric|^{2}dM_{g}+\int_{M}\nabla_{i}(\nabla_{j}f R_{ik}R_{jk})dM_{g}\nonumber\\
&&+\int_{M}\nabla_{i}\Big[f C_{ijk}R_{jk}-\frac{R}{n-1}(\nabla_{i}f R-\nabla_{j}f R_{ji})\\&&+(\nabla_{i}f|Ric|^{2}-\nabla_{j}fR_{ik}R_{jk})\Big]dM_{g}.
\end{eqnarray*} Upon rearranging the terms we use Stokes's formula to arrive at

\begin{eqnarray*}
\int_{M} f|divRm|^{2}dM_{g} &=&\int_{M} f|\nabla Ric|^{2}dM_{g}\nonumber\\&&+\int_{M}\nabla_{i}\Big[-\frac{R^{2}}{n-1}\nabla_{i}f +\frac{R}{n-1} R_{ji}\nabla_{j}f+\nabla_{i}f|Ric|^{2}\Big]dM_{g}\nonumber\\
&=&\int_{M} f|\nabla Ric|^{2}dM_{g}\nonumber\\&&+\int_{M}\Big[-\frac{R^{2}}{n-1}\Delta f+\frac{R}{n-1}\nabla_{i}\nabla_{j}f R_{ji}+\Delta f|Ric|^{2}+\langle\nabla f,\nabla|Ric|^{2}\rangle\Big]dM_{g}.
\end{eqnarray*} Therefore, by using (\ref{eq:tensorial}) and (\ref{eqtrace}) we achieve

\begin{eqnarray*}
\int_{M} f|divRm|^{2}dM_{g} &=&\int_{M}\Big[-\frac{R^{2}}{n-1}\Delta f+\frac{R}{n-1}(f R_{ij}+(\Delta f+1)g_{ij})R_{ji}+\Delta f|Ric|^{2}\Big]dM_{g}\\
&&+\int_{M} f|\nabla Ric|^{2}dM_{g}+\int_{M}\langle\nabla f,\nabla|Ric|^{2}\rangle dM_{g}\\
&=&\int_{M}\Big[\frac{R}{n-1}f|Ric|^{2}+\frac{R^{2}}{n-1} +\frac{-Rf-n}{n-1}|Ric|^{2}\Big]dM_{g}\\
&&+\int_{M} f|\nabla Ric|^{2}dM_{g}+\int_{M}\langle\nabla f,\nabla|Ric|^{2}\rangle dM_{g}.
\end{eqnarray*} This implies that

\begin{eqnarray*}
\int_{M} f|divRm|^{2}dM_{g}&=&\int_{M} f|\nabla Ric|^{2}dM_{g}-\frac{n}{n-1}\int_{M}\Big(|Ric|^{2}-\frac{R^{2}}{n}\Big)dM_{g}\\&&+\int_{M}\langle\nabla f,\nabla|Ric|^{2}\rangle dM_{g}\\
&=&\int_{M} f|\nabla Ric|^{2}dM_{g}-\frac{n}{n-1}\int_{M}|Ric-\frac{R}{n}g|^{2}dM_{g}\\&&+\int_{M}\langle\nabla f,\nabla|Ric|^{2}\rangle dM_{g}\\
&=&\int_{M} f|\nabla Ric|^{2}dM_{g}-\frac{n}{n-1}\int_{M}|Ric-\frac{R}{n}g|^{2}dM_{g}\\&&-\int_{M}f\Delta|Ric|^{2}dM_{g}.
\end{eqnarray*} From which we deduce
$$\int_{M} f|divRm|^{2}dM_{g}+\frac{n}{n-1}\int_{M}|Ric-\frac{R}{n}g|^{2}dM_{g}+\int_{M}f(\Delta|Ric|^{2}-|\nabla Ric|^{2})dM_{g}=0.$$This what we wanted to prove. 
\end{proof}

\subsection{Proof of Corollary \ref{corA}}
\begin{proof} We already know that every Riemannian manifold with parallel Ricci tensor has harmonic curvature (cf. Equation (\ref{Bianchi})). Moreover, from  {\it Kato's inequality} we have 
$$|\nabla|Ric||\leq|\nabla Ric|.$$ From this, since $M^n$ has parallel Ricci tensor, we conclude that $|Ric|$ is constant on $M^n.$ Now, it suffices to invoke Theorem \ref{thmMain} to deduce $$\frac{n}{n-1}\int_{M}|Ric-\frac{R}{n}g|^{2}dM_{g}=0$$ and this forces $M^n$ to be Einstein. Then, we are in position to use Theorem \ref{thmMT} (see also Theorem 1.1 of  \cite{miaotamTAMS}) to conclude that $(M^n ,\,g)$ is isometric to a geodesic ball in a simply connected space form $\Bbb{R}^{n},$ $\Bbb{H}^{n}$ or $\Bbb{S}^{n}.$ So, the proof is completed.
\end{proof}

\begin{acknowledgement}
The authors want to thank A. Barros, R. Di\'ogenes  and P. Miao for helpful conversations about this subject. 
\end{acknowledgement}

\end{document}